\newtheorem{definition}{Definition} [section]
\newtheorem{theorem}[definition]{Theorem}
\newtheorem{lemma}[definition]{Lemma}
\newtheorem{remark}[definition]{Remark}
\renewcommand{\P}{\mathbb{P}}
\newenvironment{proof}{\par\noindent {\bf Proof.}}
		{\begin{flushright} \vspace*{-1mm} \mbox{$\Box$} \end{flushright}}
\begin{document}
\title{Uniform random posets
				\footnotetext{This research was partially supported by Polish National Science Center - NCN, decision number 2013/09/B/ST6/02258 (OPUS 5).\\
				}
}

\author{
	Patryk Kozie{\l} \and Ma{\l}gorzata Sulkowska
}
			



\maketitle
\rm
\small
\date{}


\begin{abstract} We propose a simple algorithm generating labelled posets of given size according to the almost uniform distribution. By ``almost uniform'' we understand that the distribution of generated posets converges in total variation to the uniform distribution. Our method is based on a Markov chain generating directed acyclic graphs.
\end{abstract}

\vspace{0.2 cm}

{\bf Key words:} poset, random generator, uniform distribution, DAG, Markov chain

\vspace{0.2 cm}


\section{Introduction}
Partially ordered sets (or, in brief, posets) are widely investiagted mathematical structures. A poset consists of a set and binary relation which is reflexive, antisymmetric and transitive. These features are common for many structures modelling real-life phenomena. In general, thanks to transitivity, posets reflect the concept of ordering, they model well arrangements in which a pair of elements is either not comparable or one element precedes the other, e.g. logical task ordering, preferences of people, information flow through the network etc. 

A lot of research on posets has been done up to now (consult \cite{trotter_1} by Trotter or the chapter ``Partially Ordered Sets'' also by Trotter in \cite{trotter_2}). But surprisingly, some basic features are still not discovered, e.g. the exact number of posets on $n$ elements is known only for $n \leq 18$ in labelled case and for $n \leq 16$ in unlabelled case (the case $n=16$ was solved by McKay and Brinkmann in \cite{upto16}). Asymptotically, it is known that the logarithm of the number of posets on $n$ elements is $n^2/4 + 3n/2 + O(\log{n})$ (see \cite{kleitman}).

In this paper we will work with random posets. Random structures appeared in graph theory by Erd\"os and R\'enyi \cite{erdos}. Random graphs have received a lot of attention, standard model $G(n,p)$ is nowadays actually commonplace. Random orders however were not investigated this much, partly due to the fact that their transitivity forbids the independent choice of related pairs, which complicates significantly the process of random generation.

This paper describes a method which generates posets of given size according to the almost uniform distribution (the expression ``almost uniform'' will be explained formally in Section \ref{sec_uni_opos}). Of course, due to the fact that the number of posets grows so rapidly with the number of elements, it is impossible to generate all of them and just sample uniformly. P. Winkler in \cite{winkler_1} and \cite{winkler_2} proposed a method for generating random posets of bounded dimension by taking the intersection of randomly and independently chosen linear orders. As he says ``While our model lacks the flexibility and power of random graphs and does not weight orders uniformly, it admits a variety of approaches and may yet prove useful''. M. Albert and A. Frieze propose in \cite{frieze} generating random posets by taking a random labelled graph, directing the edges towards greater vertices and considering their transitive closures. They discuss some properties (width, dimension, first-order properties) of such obtained posets.

Our method weights posets almost uniformly. We believe that this generator will prove useful in many applications. The obvious one is generating random networks. Here one could mention e.g. activity networks for project scheduling problems or manufacturing (some reasonable network generator is always needed to test proposed solution methods, consult \cite{schedule_1} or \cite{schedule_2}). The other example is modelling the information flow through decentralized type of networks (such as ad hoc networks), where each node takes part in routing by forwarding data to the other nodes. From the fundamental research perspective almost uniform poset generator may be very helpful in investigating some average properties of posets, estimate some poset statistics or test conjectures about those structures.


One should mention that a couple of years ago a powerful method (called Boltzmann sampler) for generating combinatorial objects from desired distribution has been proposed by P. Duchon et al. and is still being developed (see \cite{boltz_1}, \cite{boltz_2} and \cite{boltz_3}). It applies to objects such as permutations, graphs, integer partitions, necklaces, ect. Nevertheless this method does not cope with objects being transitive, thus can not be applied to random poset generation.

The paper is organized as follows. Section 2 contains basic definitions and notation. In Section 3 we introduce the equivalence relation on the family of directed acyclic graphs, which we use later in the process of poset generation. In Section 4 we describe a method for almost uniform poset generation. Section 5 is dedicated to simulations.

\section{Basic definitions} 
\label{sec_def}

A {\it partially ordered set} or, in brief, {\it poset} is a pair $(X, R)$, where $X$ is a set and $R$ a reflexive, antisymmetric and transitive binary relation on $X$. For $a,b \in X$ we write $a \leq b$ whenever $(a,b) \in R$. We write $a < b$ if $a \leq b$ and $a \neq b$. We say that $a$ and $b$ are comparable if either $a \leq b$ or $b \leq a$. The cardinality of a poset is understood simply as a cardinality of $X$. Whenever $a < b$ and there is no $c$ such that $a<c$ and $c<b$, we say that $b$ {\it covers} $a$. If a poset is finite (throughout this paper we will consider only finite posets with $|X| = n$) it can be represented graphically by a {\it Hasse diagram}. In a Hasse diagram elements of $X$ are represented as vertices in the plane and a directed edge goes from $a$ to $b$ whenever $b$ covers $a$ (see Figure \ref{transCl}).

A {\it directed graph} $G$ is a pair $(V,E)$, where $V$ is a set of vertices and $E$ is a set of edges, i.e. ordered pairs of elements from $V$. A {\it size} of $G$ is understood simply as the cardinality of $V$. A {\it DAG} (which stands for {\it directed acyclic graph}) is a directed graph with no directed cycles. For $v, w \in V$ we say that $v$ is {\it reachable} from $w$ if there exists a directed path from $w$ to $v$ in $G$. A {\it transitive closure} of a DAG $G = (V,E)$ is a graph $\overline{G} = (V,E \cup F)$, where $F = \{(v,w): w {\rm~is~reachable~from~} v {\rm~in~} G\}$. A {\it transitive reduction} of a DAG $G = (V,E)$ is a graph $\underline{G} = (V, D)$, where $D \subseteq E$ and $D$ is the smallest subset of edges from $E$ which preserves reachability relation from $G$. Both, a transitive closure and a transitive reduction of a finite DAG is unique (see Figure \ref{transCl}).

\begin{figure}[!ht]
\centering
	 	\includegraphics[width=0.5\textwidth]{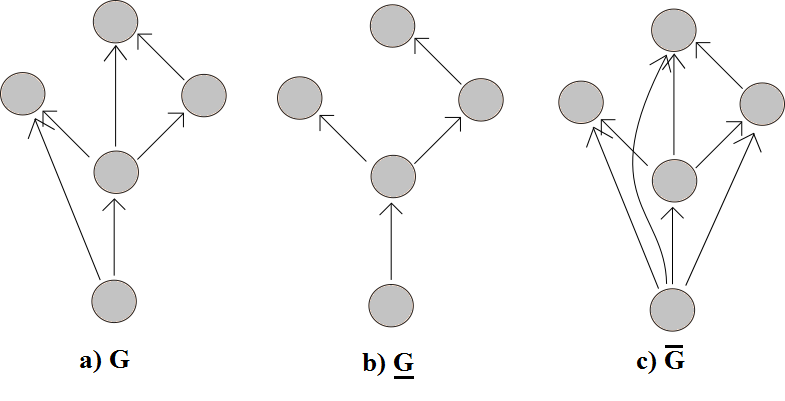}
		\caption{The example of transitive reduction and transitive closure of DAG $G$.}
		\label{transCl}
\end{figure}

Throughout this paper we will also work on a discrete-time Markov chain with a finite state space ${\cal{S}} = \{s_1, s_2, \ldots, s_N\}$. Such a {\it Markov chain} is a sequence of random variables $\{X_0, X_1, \ldots\}$, with each $X_i \in S$, following so-called Markov property: $\P[X_{t+1} = y | X_t = x_t, X_{t-1} = x_{t-1}, \ldots, X_0 = x_0] = \P[X_{t+1} = y | X_t = x_t]$. If $X_t = x$ we say that a chain is in state $x$ at time $t$. The value $\P[X_{t+1} = y | X_t = x]$ is called a {\it transition probability} and will be denoted by $p_{x,y}$. Thus a Markov chain can be described by {\it transition matrix} $P = [p_{x,y}]_{|S| \times |S|}$. An {\it initial distribution} $\mu^{(0)}$ is a row vector given by $\mu^{(0)} = (\mu^{(0)}_1, \mu^{(0)}_2, \ldots, \mu^{(0)}_N) = (\P[X_0 = s_1], \P[X_0 = s_2], \ldots, \P[X_0 = s_N])$. Similarly, $\mu^{(1)}, \mu^{(2)}, \ldots$ denote the distributions of the Markov chain at times $1,2,\ldots$. We have $\mu^{(k)} = \mu^{(0)} P^k$. A {\it stationary distribution} is a row vector $\pi = (\pi_1, \pi_2, \ldots, \pi_N)$ such that $\sum_{i=1}^{N} \pi_i = 1$ and $\pi P = \pi$. A state $x$ {\it communicates} with a state $y$ (we write $x \rightarrow y$) if there exists $k$ such that $\P[X_{m+k} = y|X_m = x] > 0$. States $x$ and $y$ {\it intercommunicate} if $x \rightarrow y$ and $y \rightarrow x$. A Markov chain is {\it irreducible} if every pair of distinct states intercommunicate. A {\it period} of a state $x$ is defined as $gcd\{k \geq 1: p_{x,x}^k > 0\}$. We say that a state is {\it aperiodic} if its period equals $1$. It is known that any Markov chain which is aperiodic and irreducible (which is called {\it ergodic}) has a unique stationary distribution (consult \cite{feller}).

We will also use a notion of a distance measure for probability distributions. For two probability distributions on ${\cal{S}} =\{s_1, s_2, \ldots, s_N\}$, $\nu = (\nu_1, \nu_2, \ldots, \nu_N)$ and $\xi = (\xi_1, \xi_2, \ldots, \xi_N)$ a {\it total variation distance} is given by $d_{TV}(\nu, \xi) = 1/2 \sum_{i=1}^{N}|\nu_i - \xi_i|$. We say that $\mu^{(m)}$ {\it converges to $\mu$ in total variation} if $\lim_{m \rightarrow \infty} d_{TV}(\mu^{(m)}, \mu) = 0$. Any irreducible and aperiodic (thus ergodic) Markov chain converges to its stationary distribution in total variation (consult \cite{levin}). We will be interested in how fast a Markov chain converges to its stationary distribution, thus we introduce also the notion of a {\it mixing time}. Let $d(t) = \max_{s \in S}\{d_{TV}(P^t(s,.),\pi)\}$, where $P^t(s,.)$ is the $s$th row of $P^t$, which is a distribution of $X_{t+1}$ under the condition $X_t = s$. If $d(t) < \varepsilon$, we say that Markov chain is {\it $\varepsilon$-close} to its stationary distribution. The {\it mixing time} of a Markov chain is given by $t_{mix}(\varepsilon) = \min\{t: d(t) < \varepsilon\}$. In practical considerations $\varepsilon$ is usually chosen as $1/(2 e) \approx 0.18$ or $1/4$ (see \cite{levin} and \cite{prasad}). 

\section{Equivalence relation on the family of DAGs} 

In our algorithm generating posets we will use the following equivalence relation on the family of DAGs of size $n$.

\begin{definition}
Let $G_1$ and $G_2$ be DAGs of size $n$. We write $G_1 \cong_n G_2$ if and only if $\overline{G_1} = \overline{G_2}$. 
\end{definition}
It is easy to check that $\cong_n$ is reflexive, symmetric and transitive, thus indeed it is the equivalence relation.

\begin{lemma}
\label{1-1}
The number of equivalence classes of $\cong_n$ equals the number of posets of size $n$.
\end{lemma}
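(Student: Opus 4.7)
The plan is to exhibit a bijection $\Phi$ between the equivalence classes of $\cong_n$ and the posets on an $n$-element (labelled) ground set $V$, thereby concluding that the two sets have the same cardinality.

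First I would define $\Phi$. Given an equivalence class $[G]$ of $\cong_n$, pick any representative $G = (V,E)$ and send $[G]$ to the relation
\[
R_G = \{(v,v) : v \in V\} \,\cup\, E(\overline{G}),
\]
viewed as a binary relation on $V$. Well-definedness is immediate from the definition of $\cong_n$: any two representatives of the same class share the same transitive closure, hence yield the same $R_G$. Next I would verify that $R_G$ is actually a poset on $V$: reflexivity is built in; transitivity holds because $\overline{G}$ is closed under reachability; antisymmetry follows because $G$ (and therefore $\overline{G}$) is acyclic, so one cannot have both $(a,b)$ and $(b,a)$ in $\overline{G}$ for $a \neq b$.

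Injectivity of $\Phi$ is essentially by construction: if $R_{G_1} = R_{G_2}$, then $\overline{G_1}$ and $\overline{G_2}$ agree on all non-diagonal pairs, and since neither contains a self-loop (both are DAGs) we get $\overline{G_1} = \overline{G_2}$, i.e.\ $G_1 \cong_n G_2$. For surjectivity, given an arbitrary poset $(V,\leq)$ of size $n$, I would produce a DAG whose image under $\Phi$ is this poset. The cleanest choice is to take $G = (V, E)$ with $E = \{(a,b) : a < b\}$; this is acyclic by antisymmetry, and because $\leq$ is already transitive one has $\overline{G} = G$, so $R_G$ coincides with $\leq$. (Equivalently, one could use the Hasse diagram / transitive reduction as the representative, since transitive closure inverts transitive reduction.) This shows $\Phi$ is onto.

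The argument is almost entirely bookkeeping; there is no deep obstacle. The only point requiring a little care is the convention mismatch between posets (which are reflexive) and DAGs (which, being acyclic, have no loops), which is why I explicitly adjoin the diagonal in the definition of $R_G$ and why injectivity uses that $\overline{G_1}, \overline{G_2}$ are loop-free. Once that is handled, $\Phi$ is a well-defined bijection and the lemma follows.
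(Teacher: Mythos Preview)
Your proof is correct and follows essentially the same route as the paper: both establish a bijection between equivalence classes of $\cong_n$ and posets on $V$ by sending $[G]$ to (the relation encoded by) $\overline{G}$, and conversely a poset to its strict-order DAG. Your version is in fact more careful than the paper's, which asserts without comment that the relation coming from $\overline{G}$ is reflexive; you handle this cleanly by adjoining the diagonal and then using loop-freeness in the injectivity step.
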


\begin{proof}
Let $G=(V,E)$ be a DAG. Consider $\overline{G}=(V,F)$. Note that we can iterpret each edge $(v,w)$ of $\overline{G}$ as a pair which belongs to some binary relation on $V$. Since $\overline{G}$ is a transitive closure of DAG, the obtained relation will be reflexive, antisymmetric and transitive. Thus $(V,F)$ is a poset. On the other hand each poset can be easily transformed into DAG using the same interpretation of edges. The obtained DAG will form some transitive closure. Note that this operation gives us one-to-one correspondence between the family of posets of size $n$ and the family of equivalence classes of $\cong_n$. The conclusion follows.
\end{proof}
From now on by $[G]$ we denote the equivalence class of $G$ and by $|[G]|$ its cardinality.

\begin{lemma}
\label{l_eq_card}
Let $G$ be an arbitrary DAG with $k$ disconnected components\\
$G_1, G_2, \ldots, G_k$. The cardinality of the equivalence class of $G$ satisfies
$$ |[G]| = 2^{\sum_{i=1}^{k} (l_i-r_i)}, $$
where $l_i$ is the number of edges in $\overline{G_i}$ and $r_i$ is the number of edges in $\underline{G_i}$.
\end{lemma}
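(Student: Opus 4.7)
The key observation is that the equivalence class $[G]$ coincides with the family of DAGs $H$ on the vertex set $V(G)$ satisfying $E(\underline{G}) \subseteq E(H) \subseteq E(\overline{G})$. Once this characterization is in hand the count is immediate: such an $H$ is obtained by choosing, independently for each ``redundant'' edge in $E(\overline{G}) \setminus E(\underline{G})$, whether or not to include it. Moreover, when $G$ decomposes into disconnected components $G_1, \ldots, G_k$ (in the undirected sense), both the transitive closure and the transitive reduction respect this decomposition: no closure edge can jump between components since there is no underlying undirected path to propagate reachability, and reduction edges are a subset of the original edges of $G$. Hence $|E(\overline{G})| = \sum_i l_i$ and $|E(\underline{G})| = \sum_i r_i$, yielding $|[G]| = 2^{\sum_i(l_i - r_i)}$.

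\textbf{Key steps.} First I would prove the easy inclusion: if $E(\underline{G}) \subseteq E(H) \subseteq E(\overline{G})$, then by monotonicity of the closure operation $\overline{G} = \overline{\underline{G}} \subseteq \overline{H} \subseteq \overline{\overline{G}} = \overline{G}$, so $\overline{H} = \overline{G}$ and $H \in [G]$. Next I would establish the reverse inclusion. The bound $E(H) \subseteq E(\overline{G})$ is immediate from $E(H) \subseteq E(\overline{H}) = E(\overline{G})$. The substantive content is $E(\underline{G}) \subseteq E(H)$, which I would prove by contradiction: assume some edge $(u,v) \in E(\underline{G})$ is absent from $H$. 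Since $(u,v) \in E(\overline{H})$, there exists a directed path $u \to w_1 \to \cdots \to w_m \to v$ in $H$ with $m \geq 1$; setting $w := w_1$, both $(u,w)$ and $(w,v)$ lie in $E(\overline{H}) = E(\overline{G})$, so $w$ is reachable from $u$ and $v$ from $w$ inside $G$, hence also inside $\underline{G}$.

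\textbf{Main obstacle.} The delicate step is extracting a contradiction with the minimality of $\underline{G}$: I want to conclude that deleting $(u,v)$ from $\underline{G}$ still preserves reachability from $u$ to $v$ via $w$, which would violate the definition of the transitive reduction. The subtlety is that the two witnessing paths in $\underline{G}$ (from $u$ to $w$ and from $w$ to $v$) might a priori traverse the very edge $(u,v)$, invalidating any naive splicing. I would rule this out using acyclicity of $G$: if, say, the path from $w$ to $v$ in $\underline{G}$ used the edge $(u,v)$, it would contain a subpath $v \to \cdots \to u$, which together with $u \to v$ would create a directed cycle in $G$, impossible. A symmetric argument excludes $(u,v)$ from the $u \to w$ path and forces $w \notin \{u,v\}$. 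Once this is settled, the two paths splice into a $u$-to-$v$ path in $\underline{G} \setminus \{(u,v)\}$, contradicting minimality and closing the characterization; the component decomposition then delivers the formula as described above.
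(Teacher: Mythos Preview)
Your proof is correct and follows exactly the paper's route: identify $[G]$ with the DAGs $H$ satisfying $E(\underline G)\subseteq E(H)\subseteq E(\overline G)$ and count the $2^{|E(\overline G)\setminus E(\underline G)|}$ choices, then split the edge count by components. You actually justify the sandwich characterization more carefully than the paper (which simply asserts it); one small slip to fix in your ``main obstacle'' paragraph is that if the $w\to v$ path in $\underline G$ uses the edge $(u,v)$ it produces a subpath $w\to\cdots\to u$ (not $v\to\cdots\to u$), and the cycle then comes from combining this with the reachability $u\to\cdots\to w$ guaranteed by $(u,w)\in E(\overline G)$.
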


\begin{proof}
Consider an equivalence class of any $G$. Note that each graph from this class contains all the edges from $\underline{G} = (V,E)$ ($\underline{G}$ is here the smallest graph which preserves the proper reachability relation). $\overline{G} = (V,F)$ is in this class the richest graph preserving reachability relation. Thus every graph from this class is of the form $H=(V,E \cup D)$, where $D \subseteq F \setminus E$. $|F \setminus E| = \sum_{i=1}^{k} (l_i-r_i)$, thus $|[G]| = 2^{\sum_{i=1}^{k} (l_i-r_i)}$.
\end{proof}


\section{Almost uniform poset generation} 
\label{sec_uni_opos}

In this section we present an algorithm which samples labelled posets of given size almost uniformly. By ``almost uniformly'' we mean that the distribution of posets generated by our algorithm converges in total variation to the uniform distribution (see Theorem \ref{th_dtv_conv}).

Let ${\cal{P}} = \{P_1, P_2, \ldots, P_M\}$ be the family of all labelled posets on $n$ elements (the cardinality of ${\cal{P}}$ is denoted by $M$). Still, ${\cal{S}} = \{G_1, G_2, \ldots, G_N\}$ denotes the set of all labelled DAGs on $n$ vertices. The relation $\cong_n$ partitions ${\cal{S}}$ into $M$ equivalence classes $C_1, C_2, \ldots , C_M$ (recall Lemma \ref{1-1}) such that if $G \in C_i$ then $\overline{G}$ corresponds to $P_i$.

In our method we will modify the Markov chain that was introduced by Bousquet-M\'elou, Melan\c{c}on and Dutour in \cite{dags}. We start this section with describing the original Markov chain from \cite{dags}.

\subsection{Almost uniform DAG generation} 

Below we present a Markov chain introduced in \cite{dags}.
\begin{definition}[Markov chain $MC_n$]
Let ${\cal{S}}$ (the set of all labelled DAGs on $n$ vertices) be the state space of a Markov chain $MC_n = \{X_0, X_1, \ldots\}$. We start with an empty graph as $X_0$. 
At each step $t=1,2,\ldots$ we choose uniformly at random a directed edge $(i,j)$. Afterwards
\begin{enumerate}
\item If there exists an edge $(i,j)$ in $X_t$ then $X_{t+1} = X_t \setminus (i,j)$.
\item If there is no $(i,j)$ in $X_t$, then $X_{t+1} = X_t \cup (i,j)$ provided that the graph remains acyclic; otherwise $X_{t+1} = X_t$.
\end{enumerate}
\end{definition}
It is easy to verify that this Markov chain is irreducible and aperiodic and that its transition matrix is symmetric. It follows that its stationary distribution is uniform over the set of all labelled DAGs.

\subsection{DAG generator $MC^*_n$ with arbitrary stationary distribution}
\label{MC_star}
For the algorithm sampling posets almost uniformly we need a Markov chain with state space $\cal{S}$, but stationary distribution other than uniform. We aim for a stationary distribution $\pi = \{\frac{1}{M |[G_1]|}, \frac{1}{M |[G_2]|}, \ldots, \frac{1}{M |[G_N]|}\}$, which means that in stationary distribution each DAG corresponds with the probability being the reciprocal of the cardinality of its equivalence class multiplied by the normalization constant $1/M$.
In order to achieve it, we apply the Metropolis algorithm that transforms any irreducible Markov chain into Markov chain with a required stationary distribution. 

\begin{lemma}[\cite{mitz_upf}, Lemma 10.8]
\label{l_metro}
For a finite space $\cal{S}$ and the neighborhood structure $\{N(x): x \in {\cal{S}}\}$, let $L = \max_{x \in {\cal{S}}}|N(x)|$. Let $K$ be any number such that $K \geq L$. For all $x \in {\cal{S}}$, let $\pi_x > 0$ be the desired probability of state $x$ in the stationary distribution. Consider a Markov chain where
$$
  p_{x,y} = \left\{ \begin{array}{lclcl} (1/K \min\{1,\pi_y/\pi_x\}) & {\rm if} & x \neq y & {\rm and} & y \in N(x), \\																																								0													 & {\rm if} & x \neq y & {\rm and} & y \notin N(x), \\
  																			1- \sum_{x \neq y} p_{x,y} & {\rm if} & x=y. \end{array} \right.
$$
Then, if this chain is irreducible and aperiodic, the stationary distribution is given by the probabilities $\pi_x$.
\end{lemma}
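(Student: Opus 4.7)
The plan is to verify that $\pi = (\pi_x)_{x \in \cal{S}}$ satisfies the detailed balance (reversibility) equations $\pi_x p_{x,y} = \pi_y p_{y,x}$ for all $x,y \in \cal{S}$. Summing such an identity over $x$ immediately gives $\pi P = \pi$, i.e. $\pi$ is stationary, and then ergodicity (irreducibility plus aperiodicity, assumed in the statement) supplies uniqueness of the stationary distribution, so the chain really does converge to the prescribed $\pi$.

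The key step is the case analysis for detailed balance. For $x = y$ the identity is trivial. For $x \neq y$, one notes that the neighborhood structure is implicitly symmetric (this is how the Metropolis construction is meant to be applied: $y \in N(x)$ iff $x \in N(y)$), so either both $p_{x,y}$ and $p_{y,x}$ are $0$, or both are of the form $(1/K)\min\{1,\pi_\cdot/\pi_\cdot\}$. In the latter case, I would assume without loss of generality that $\pi_x \le \pi_y$; then
\[
p_{x,y} = \frac{1}{K}\min\!\left\{1, \frac{\pi_y}{\pi_x}\right\} = \frac{1}{K}, \qquad p_{y,x} = \frac{1}{K}\min\!\left\{1, \frac{\pi_x}{\pi_y}\right\} = \frac{1}{K}\cdot\frac{\pi_x}{\pi_y},
\]
which gives $\pi_x p_{x,y} = \pi_x/K = \pi_y p_{y,x}$, as required. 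The self-loop probability $p_{x,x} = 1 - \sum_{y \neq x} p_{x,y}$ is designed precisely so that each row of $P$ sums to $1$, so the transition matrix is legitimate; this needs a one-line sanity check that $\sum_{y \neq x} p_{x,y} \le |N(x)|/K \le L/K \le 1$.

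From detailed balance, summing over $x$ yields $\sum_x \pi_x p_{x,y} = \pi_y \sum_x p_{y,x} = \pi_y$ for every $y$, which is exactly $\pi P = \pi$. Combined with the hypothesis $\sum_x \pi_x = 1$ (implicit in calling the $\pi_x$ probabilities), this shows $\pi$ is a stationary distribution. Finally, since an ergodic Markov chain on a finite state space has a unique stationary distribution (this is the standard fact cited earlier in the paper from \cite{feller}), $\pi$ must coincide with that stationary distribution.

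The only mildly delicate point — and the only place one could slip up — is the tacit symmetry of the neighborhood structure: without it, one can have $y \in N(x)$ while $x \notin N(y)$, which would break the matching of the two cases and force $\pi_x/K = 0$, a contradiction. So I would add a short sentence making the symmetry assumption explicit before carrying out the case analysis. Everything else is a direct verification.
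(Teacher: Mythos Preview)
Your argument is correct and is exactly the standard detailed-balance verification for the Metropolis chain; the sanity check on row sums and the remark about symmetry of the neighborhood relation are both appropriate. Note, however, that the paper does not actually prove this lemma: it is quoted verbatim as Lemma~10.8 of \cite{mitz_upf} and used as a black box, so there is no ``paper's own proof'' to compare against. Your write-up simply supplies the (standard) proof that the cited reference contains.
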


Now, consider the following Markov chain.
\begin{definition}[Markov chain $MC^*_n$]
Let $\cal{S}$ be the state space of Markov chain $MC^*_n = \{X_0, X_1, \ldots\}$. Start with an empty graph at $X_0$. At each step $t=1,2,\ldots$ choose uniformly at random a directed edge $(i,j)$. Let $Y = X_{t} \cup (i,j)$ and $Z = X_t \setminus (i,j)$.
\begin{enumerate}
\item If there exists an edge $(i,j)$ in $X_t$ then with probability $\min\left\{1, \frac{|[X_t]|}{|[Z]|}\right\}$ set $X_{t+1} = Z$ and with probability $1-\min\left\{1, \frac{|[X_t]|}{|[Z]|}\right\}$ set $X_{t+1} = X_t$.
\item If there is no $(i,j)$ in $X_t$, then if $Y$ has a directed cycle, $X_{t+1}=X_t$; otherwise with probability $\min\left\{1, \frac{|[X_t]|}{|[Y]|}\right\}$ set $X_{t+1} = Y$ and with probability $1-\min\left\{1, \frac{|[X_t]|}{|[Y]|}\right\}$ set $X_{t+1} = X_t$.
\end{enumerate}
\end{definition}

\begin{remark}
Note that given any DAG we are able to calculate the cardinality of its equivalence class from Lemma \ref{l_eq_card}.
\end{remark}

\begin{lemma}
The stationary distribution of the Markov chain $MC^*_n$ is \\
$\pi = \left\{\frac{1}{M |[G_1]|}, \frac{1}{M |[G_2]|}, \ldots, \frac{1}{M |[G_N]|}\right\}$.
\end{lemma}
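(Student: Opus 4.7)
The plan is to verify that $MC^*_n$ is an instance of the Metropolis construction of Lemma \ref{l_metro} applied with target distribution $\pi_G = 1/(M|[G]|)$, and then check its irreducibility and aperiodicity so that the lemma's conclusion delivers the stated stationary distribution.

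First I would set up the neighborhood structure: declare $H \in N(G)$ iff $H$ and $G$ differ by exactly one directed edge and both are DAGs. Since every subgraph of a DAG is a DAG, this relation is symmetric: if $H = G \cup (i,j)$ is a DAG, then $G = H \setminus (i,j)$ is automatically a DAG, and vice versa. Taking $K = n(n-1)$, the number of ordered pairs $(i,j)$ with $i \neq j$, we certainly have $K \geq \max_G |N(G)| = L$, since each neighbor of $G$ is obtained by toggling one of at most $n(n-1)$ possible edges.

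Next I would match the transition probabilities. In one step of $MC^*_n$, a specific edge $(i,j)$ is selected with probability $1/K$, and conditional on this choice the move to $Z = G \setminus (i,j)$ or $Y = G \cup (i,j)$ is accepted with probability $\min\{1, |[G]|/|[Z]|\}$ or $\min\{1, |[G]|/|[Y]|\}$ respectively; moves that would create a cycle are never accepted, i.e.\ probability $0$ to non-neighbors. Because $\pi_H / \pi_G = |[G]|/|[H]|$, these acceptance probabilities coincide with $\min\{1, \pi_H/\pi_G\}$, so the off-diagonal entries of the transition matrix match Lemma \ref{l_metro} exactly; the diagonal is forced by row-sum $1$.

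It remains to verify irreducibility and aperiodicity. For irreducibility, from any DAG $G$ we can reach the empty graph by removing edges one at a time --- each such removal is accepted with positive probability since $\min\{1, |[G]|/|[Z]|\} > 0$ --- and conversely the empty graph can grow into any target DAG by adding the edges of a topological extension one at a time, never creating a cycle, again with positive acceptance. Hence all states intercommunicate. For aperiodicity, it suffices to exhibit one state $G$ with $p_{G,G} > 0$: for $n \geq 2$ take $G$ containing the edge $(1,2)$; then attempting to add $(2,1)$ creates a cycle and forces $X_{t+1} = X_t$, so $p_{G,G} \geq 1/K > 0$, and combined with irreducibility this gives aperiodicity of the whole chain (the case $n = 1$ is trivial). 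The main (and only nontrivial) step is really the algebraic match of the acceptance probabilities with the Metropolis ratio; once that is in place, Lemma \ref{l_metro} immediately yields $\pi = \{1/(M|[G_i]|)\}_{i=1}^N$ as the unique stationary distribution.
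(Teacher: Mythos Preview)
Your proposal is correct and follows essentially the same approach as the paper: identify $MC^*_n$ as the Metropolis chain of Lemma~\ref{l_metro} with $K=n(n-1)$ and target weights $\pi_G=1/(M|[G]|)$, then invoke irreducibility and aperiodicity. The paper's proof is terser --- it simply asserts irreducibility and aperiodicity --- whereas you supply explicit arguments (communication via the empty graph, and a self-loop from a forbidden edge-addition), which is a welcome addition rather than a different route.
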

\begin{proof}
We define the neighborhood structure on $\cal{S}$ in a natural way. For $x, y \in {\cal{S}}$ we say that $x$ and $y$ are neighbors ($x \in N(y)$ and $y \in N(x)$) if they differ in exactly one edge (i.e., if we can obtain one from the other by adding or removing one edge). Then we can present the transition matrix of $MC^*_n$ as follows:
$$
  p_{x,y} = \left\{ \begin{array}{lclcl} (1/(n(n-1)) \min\{1,\pi_y/\pi_x\}) & {\rm if} & x \neq y & {\rm and} & y \in N(x), \\																																								0													 & {\rm if} & x \neq y & {\rm and} & y \notin N(x), \\
  																			1- \sum_{x \neq y} p_{x,y} & {\rm if} & x=y, \end{array} \right.
$$
where (as in Lemma \ref{l_metro}) for $x \in {\cal{S}}$ $\pi_x$ is the desired probability of the state $x$ in the stationary distribution ($\pi_x = 1/(M |[x]|)$). Note that $1/(n(n-1))$ is exactly the probability of drawing uniformly a directed edge $(i,j)$ and $\max_{x \in {\cal{S}}}|N(x)| = n(n-1)$ (consider e.g. an empty graph whose neighbors are all DAGs with only one edge). The chain is irreducible and aperiodic thus, by Lemma \ref{l_metro}, $\pi$ is the stationary distribution of $MC^*_n$.
\end{proof}
A DAG returned by $MC^*_n$ after performing $m$ steps will be denoted by $G^*_{n,m}$, i.e. $X_m = G^*_{n,m}$. 

\subsection{Almost uniform poset generator $PG_{n,m}$}

In this section we describe a method for almost uniform poset generation and discuss how close is  the resulting distribution to the uniform one.

\begin{definition}[Almost uniform poset generator $PG_{n,m}$]
Run a Markov chain $MC^*_n$ for $m$ steps. Return $\overline{G}^*_{n,m}$ as the desired poset.
\end{definition}

\begin{remark}
Recall that there exists a one-to-one correspondence between the family of transitive closures of DAGs of size $n$ and the family of posets on $n$ elements.
\end{remark}

\begin{theorem}
\label{th_dtv_conv}
Let $\xi^{(m)}$ be the distribution of the poset returned by $PG_{n,m}$. $\xi^{(m)}$ converges in total variation to the uniform distribution $\xi = \{1/M, \ldots, 1/M\}$, i.e. $\lim_{m \rightarrow \infty} d_{TV}(\xi^{(m)}, \xi) = 0$.
\end{theorem}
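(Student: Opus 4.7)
The plan is to push the convergence result for the underlying Markov chain $MC^*_n$ through the deterministic map $G \mapsto \overline{G}$, which identifies a DAG with its equivalence class, i.e.\ with a poset. Let $\mu^{(m)}$ denote the distribution of $X_m = G^*_{n,m}$ on $\cal{S}$. The previous subsection established that $MC^*_n$ is irreducible and aperiodic with stationary distribution $\pi$ given by $\pi_{G} = 1/(M|[G]|)$, so the standard convergence theorem for finite ergodic Markov chains (cited in Section \ref{sec_def}) yields $d_{TV}(\mu^{(m)}, \pi) \to 0$ as $m \to \infty$.

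Next I would compute the pushforward of $\pi$ under the map sending a DAG to its transitive closure (equivalently, to the poset indexing its equivalence class). For each poset $P_j$ the preimage is exactly the equivalence class $C_j$, and every $G \in C_j$ satisfies $|[G]| = |C_j|$. Hence
\begin{equation*}
\sum_{G \in C_j} \pi_G \;=\; \sum_{G \in C_j} \frac{1}{M|C_j|} \;=\; \frac{1}{M},
\end{equation*}
which is precisely the uniform distribution $\xi$ on $\cal{P}$. This is the reason the weights $1/(M|[G]|)$ were chosen in Section \ref{MC_star} in the first place.

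Finally, I would invoke the contraction property of total variation under any deterministic (or, more generally, Markov) mapping: if $f: {\cal{S}} \to {\cal{P}}$ sends $G$ to the poset corresponding to $\overline{G}$, then for any two distributions $\nu, \rho$ on $\cal{S}$ one has $d_{TV}(f_*\nu, f_*\rho) \leq d_{TV}(\nu, \rho)$. Applying this with $\nu = \mu^{(m)}$, $\rho = \pi$, $f_*\nu = \xi^{(m)}$ and $f_*\rho = \xi$ gives
\begin{equation*}
d_{TV}(\xi^{(m)}, \xi) \;\leq\; d_{TV}(\mu^{(m)}, \pi) \;\xrightarrow[m\to\infty]{}\; 0,
\end{equation*}
which is the desired conclusion.

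There is no serious obstacle here; the proof is essentially bookkeeping, and the only point that deserves a line of justification is the contraction inequality, which follows immediately from the definition $d_{TV}(\nu,\rho) = \max_{A} |\nu(A) - \rho(A)|$ by taking preimages under $f$. If a reader prefers a more elementary argument, one can instead write $|\xi^{(m)}(P_j) - 1/M| \leq \sum_{G \in C_j} |\mu^{(m)}(G) - \pi_G|$ and sum over $j$ to obtain the same bound directly from the $\ell_1$ definition of total variation.
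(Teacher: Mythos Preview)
Your proposal is correct and is essentially the paper's argument: the paper bounds $d_{TV}(\xi^{(m)},\xi)$ by $d_{TV}(\pi^{(m)},\pi)$ via the triangle inequality on the $\ell_1$ sum over equivalence classes, which is exactly the ``elementary argument'' you spell out at the end and is a concrete instance of the pushforward contraction you invoke first. The only cosmetic difference is that you phrase the key step abstractly (TV contracts under deterministic maps), whereas the paper writes out the one-line triangle-inequality computation directly.
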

\begin{proof}
Recall that ${\cal{S}} = \{G_1, G_2, \ldots, G_N\}$ and ${\cal{P}} = \{P_1, P_2, \ldots, P_M\}$. For $i \in \{1, 2, \ldots, M\}$ let $C_i = \{j: \overline{G}_j = P_i\}$, i.e. $C_i$ is the set of indices of those DAGs which belong to the equivalence class of the poset $P_i$. Recall that $\pi = \{\frac{1}{M |[G_1]|}, \frac{1}{M |[G_2]|}, \ldots, \frac{1}{M |[G_N]|}\}$ is the stationary distribution of $MC^*_n$, thus $\pi^{(m)} = (\pi^{(m)}_1, \pi^{(m)}_2, \ldots, \pi^{(m)}_N)$ is the distribution of $MC^*_n$ in the $m$th step. Note that $\xi^{(m)}_i = \sum_{j \in C_i} \pi^{(m)}_j$ because $PG_{n,m}$ returns $P_i$ if and only if $MC^*_n$ returns in the $m$th step one of the DAGs from the equivalence class of $P_i$ (and these events are disjoint and independent). We have
\[
\begin{split}
d_{TV}(\xi^{(m)}, \xi) & = 1/2 \sum_{i=1}^{M}|\xi^{(m)}_i - 1/M| = 1/2 \sum_{i=1}^{M}\left|\sum_{j \in C_i} \pi^{(m)}_j - 1/M \right| \\
& = 1/2 \sum_{i=1}^{M}\left|\sum_{j \in C_i} \left(\pi^{(m)}_j - \frac{1}{|C_i| M}\right) \right| \\
& \leq 1/2 \sum_{i=1}^{M}\sum_{j \in C_i} \left|\pi^{(m)}_j - \frac{1}{|C_i| M}\right| \\
& = d_{TV}(\pi^{(m)}, \pi) \xrightarrow{m \rightarrow \infty} 0,
\end{split}
\]
where the inequality follows from the triangle inequality and the last line from the fact that $MC_n^*$ is irreducible and aperiodic, thus converges to its stationary distribution in total variation.
\end{proof}

We already know that the distribution of posets generated by $PG_{n,m}$ converges (in total variation) to the uniform one. However, we are also interested in how fast does it converge to know for how long should we run $MC^*_n$ in order to be acceptably close to the uniform distribution. The desired number of iterations can be described by the notion of mixing time (this is a very common approach while working with Markov chains, consult \cite{prasad} or \cite{levin}) of Markov chain which in this case seems to be difficult to calculate. However, one can state a conjecture drawn from the fact that the maximal distance between any two acyclic graphs is bounded by $n(n-1)$ - the path connecting them (for sure not always the optimal one) goes through the empty graph (one can delete all edges from the first graph and then add all the edges from the second graph). This observation suggests that performing the quadratic number of steps of $MC^*_n$ brings us acceptably close (e.g. $1/4$-close, recall the definition of mixing time from Section \ref{sec_def}) to the uniform distribution. The simulations we conducted (see Section \ref{sec_sim}) confirm the soundness of this conjecture.

\section{Results of simulations}
\label{sec_sim}

We have implemented and tested the generator $PG_{n,m}$ in Python 2.7.13 using version 7.6 of mathematical package SageMath. The experiments we have conducted suggest that running $MC^*_n$ for quadratic number of steps brings the distribution of posets close (even less then $0.1$-close for $n=4$ or $n=5$) to the uniform one.

\subsection{Time complexity of $PG_{n,m}$}

The overall time complexity of our algorithm depends on the complexity of two procedures. First is the method of checking if DAG is acyclic every time we try to add a new edge to $X_t$; the method we use has in worst case complexity ${\cal{O}}(n^2)$ . Second is counting the probability of moving to a new state of $MC_n^*$, which is combined with counting the number of edges in transitive closure and transitive reduction of some DAG (recall Lemma \ref{l_eq_card}); here the upper bound for number of computational operations is ${\cal{O}}(n^4)$. It may happen that we will have to perform both procedures in every step of the chain $MC_n^*$. Since we run $MC_n^*$ for quadratic number of steps, we obtain that the time complexity of our algorithm is ${\cal{O}}(n^6)$, thus polynomial in the number of elements of the poset.

\subsection{Empirical distribution of posets generated by $PG_{n,m}$}

In Figures \ref{hist_4} and \ref{hist_5} we present two histograms generated by running $PG_{n,m}$. We have generated $100~000$ samples of labelled posets of cardinality $4$ and the same number of samples of labelled posets of cardinality $5$. We ran $MC_n^*$ for quadratic number of steps. Thus below one can see the results of running $PG_{4,16}$ and $PG_{5,25}$.

\begin{figure}[!ht]
\centering
	 	\includegraphics[width=0.9\textwidth]{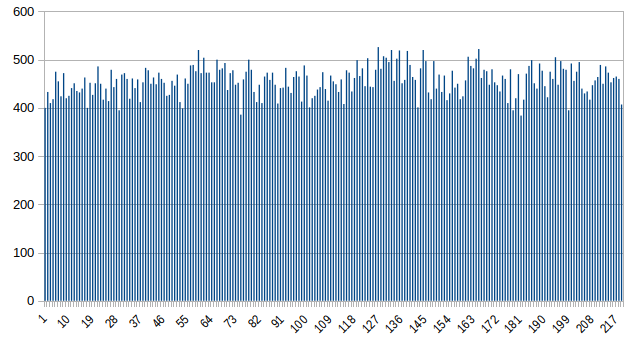}
		\caption{The histogram of 100~000 samples of labelled posets returned by $PG_{4,16}$. (There are $219$ posets of cardinality $4$; they were numbered in the order of appereance during the algorithm execution.)}
\label{hist_4}
\end{figure}

\begin{figure}[!ht]
\centering
	 	\includegraphics[width=0.9\textwidth]{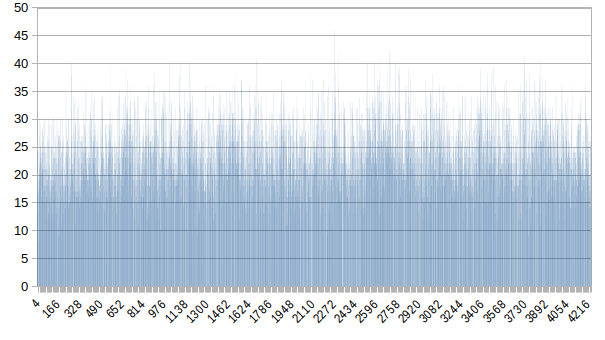}
		\caption{The histogram of 100~000 samples of labelled posets returned by $PG_{5,25}$. (There are $4231$ posets of cardinality $5$; they were numbered in the order of appereance during the algorithm execution.)}
\label{hist_5}
\end{figure}

We have calculated also the empirical total variation distances $\tilde{d}_{TV}$ as follows. For $n=4$ for $i \in \{1, 2, \ldots, 219\}$ let $z_i$ be the number of samples of $i$th category. Then $\tilde{d}_{TV} = 1/2 \sum_{i=1}^{219}|z_i/100~000 - 1/219|$ (and analogously for $n=5$). We obtained $\tilde{d}_{TV} \approx 0.026$ for $n=4$ and $\tilde{d}_{TV} \approx 0.092$ for $n=5$. Both values are significantly smaller than $1/4$ or $1/(2 e)$ - the constants chosen usually as acceptale in practical considerations while examining the mixing time (\cite{levin}, \cite{prasad}).

Presenting histograms for bigger $n$ would be rather illegible. Already for $n=6$ the number of labelled posets reaches $130~023$ while e.g. for $n=8$ about $4 \cdot 10^8$. Nevertheless, the algorithm samples easily the posets of bigger cardinalities. The example of a poset of size $21$ generated by $PG_{21,441}$ is given in Figure \ref{opos_21} (the exact number of labelled posets for $n>18$ is not known).

\begin{figure}[!ht]
\centering
	 	\includegraphics[width=0.4\textwidth]{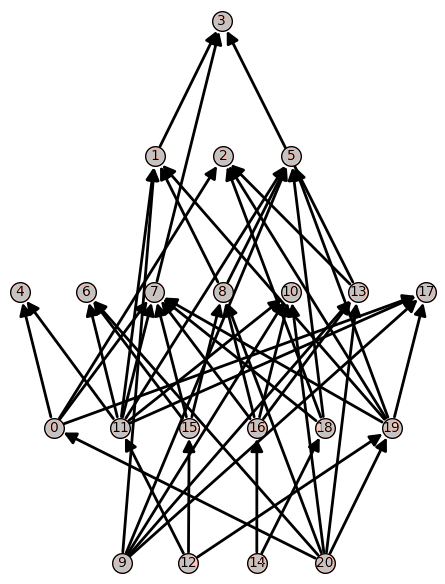}
		\caption{The poset on $21$ elements generated by $PG_{21,441}$.}
\label{opos_21}
\end{figure}

\section{Conclusion}
We have presented a method for generating labelled posets of given size from distribution close to uniform. We believe that it will find a number of applications, as we mentioned in the introduction. Another interesting step could be designing a method for generating unlabelled posets, wich seems to be pretty challenging. The first thing that comes to mind is that one will probably have to consider methods in which the isomorphism of two labelled posets is checked. This problem can be viewed as graph isomorphism problem. It was proved recently by L{\'{a}}szl{\'{o}} Babai \cite{DBLP:journals/corr/Babai15} that there exists a graph isomorphism test which runs in quasipolynomial time. This gives hope that the time complexity of the algorithm generating unlabelled posets would also stay reasonable.

\bibliographystyle{apa}
\bibliography{bib}


\end {document}